\newcommand{\R}{\mathbb{R}}
\newcommand{\pp}{\mathbb{P}}
\newcommand{\C}{\mathbb{C}}
\newcommand{\Z}{\mathbb{Z}}
\newcommand{\F}{\mathcal{F}}
\newcommand{\Sc}{\mathcal{S}}
\newcommand{\Exp}{\mathbb{E}}
\newcommand{\inpr}[3][]{\left\langle#2 \,,\, #3\right\rangle_{#1}}
\numberwithin{equation}{section}
\newtheorem{theorem}{Theorem}[section]
\newtheorem{corollary}[theorem]{Corollary}
\newtheorem{lemma}[theorem]{Lemma}
\newtheorem{definition}[theorem]{Definition}
\title[Stochastic PDE with bilaplacian]{Stochastic PDEs involving a bilaplacian operator}
\author{Suprio Bhar}
\address{Suprio Bhar, Department of Mathematics and Statistics, Indian Institute of Technology Kanpur, Kalyanpur, Kanpur - 208016, India.}
\email{suprio@iitk.ac.in}
\author{Barun Sarkar}
\address{Barun Sarkar, Department of Mathematics, Indian Institute of Technology Madras, Chennai - 600036, India.}
\email{barun@iitm.ac.in}
\begin{document}
\keywords{Tempered distributions, Hermite-Sobolev space, Strong solution, Monotonicity inequality, $\Sc^\prime$ valued processes, Bilaplacian operator, Probabilistic representation, Fourth order linear PDE}
 \subjclass[2020]{Primary: 60H15, 60H30, 35G05; Secondary: 60G15, 46F05}

\begin{abstract}
In this article, we study the existence and uniqueness problem for linear Stochastic PDEs involving a bilaplacian operator. Our results on the existence and uniqueness are obtained through an application of a Monotonicity inequality, which we also prove here. As an application of these results, we also obtain a probabilistic representation of the solution for a linear PDE involving the bilaplacian operator.
\end{abstract}

\maketitle

\section{Introduction}\label{S1:intro}

Let $\Sc$ be the space of rapidly decreasing smooth functions on $\R$, called the Schwartz space (due to L. Schwartz \cite{MR0035918}) and $\Sc^\prime$ denote the dual space, called the space of tempered distributions. Let $(\Omega, \F, (\F_t)_{t \geq 0}, \pp)$ be a filtered probability space satisfying the usual conditions. We consider the following Stochastic PDE (SPDE) in $\Sc^\prime$: 

\begin{equation}\label{bilapspdeß1}
dX_t =  L(X_t)\, dt +   A(X_t)\cdot dB_t, t\geq 0;\ \ X_0= \Psi,
\end{equation}
and the associated PDE in $\Sc^\prime$:
\begin{equation}\label{bilappde}
\frac{\partial}{\partial t} u_t =  L(u_t), t \geq 0;\ \ u_0= \Psi,
\end{equation}
where, 
\begin{enumerate}[label=(\roman*)]
    \item $\Psi \in \Sc^\prime$ is $\F_0$-measurable random variable,
    \item $\{B_t\}_{t\geq0}$ is a $2$-dimensional standard Brownian motion with components given by $B_t :=  \begin{pmatrix}
        B_t^1\\B_t^2
    \end{pmatrix}$,
    
\item $L: \Sc^\prime \to \Sc^\prime, A: \Sc^\prime \to \Sc^\prime \times \Sc^\prime$ are linear differential operators defined as follows: 
\begin{equation}\label{L-operator}
L(\phi) := -\frac{\kappa^2}{2}\, \partial^4\, \phi + \frac{\sigma^2}{2}\, \partial^2\, \phi -b\, \partial\, \phi,
\end{equation}
and $A(\phi) := (A_1 (\phi), A_2 (\phi))$, with $A_1,A_2:\Sc'\to\Sc'$, such that
\begin{equation}\label{A-operator}
A_1(\phi) := -\sigma \partial \phi,\quad A_2(\phi) := \kappa \partial^2 \phi,
\end{equation}

\item $\kappa, \sigma, b$ are real constants,

\item $AX_t\cdot\, dB_t = A_1 X_t\,  dB^1_t + A_2 X_t\, dB^2_t$.

\end{enumerate}

In this paper, we prove the existence and uniqueness of strong solution of the Stochastic PDE \eqref{bilapspdeß1} (see Theorem \ref{exist-unique-soln-spde}) and the PDE \eqref{bilappde} (see Theorem \ref{exist-unique-soln-pde}) in $\Sc^\prime$. Moreover, we also obtain a stochastic representation of the solution to the PDE \ref{exist-unique-soln-pde}. Our proof heavily relies on the Monotonicity inequality for $(L, A)$ (see Corollary \ref{L-A-Monotonicity}), involving a bilaplacian operator. Now, we present a brief literature survey related to our discussion.

\begin{enumerate}[label=(\Roman*)]
    \item Bilaplacian operator appears in many practical models, for example, membrane models \cite{MR2510021,MR4038046,MR4115023,MR4215327}, sandpiles model \cite{MR3877548}  and the references therein.

    \item The Monotonicity inequality, originally introduced in \cite{MR570795}, has been of considerable interest in the study of Stochastic PDEs \cite{MR3063763,MR2479730,MR4117986,MR4148176,MR2590157, MR1465436, MR2373102}. In the case when $L$ is a second order linear differential operator, a Monotonicity inequality for $(L, A)$ was first established for constant coefficient differential operators in \cite{MR2590157} and then for some variable coefficients in \cite{MR3331916, MR4117986}. A related inequality, called the Coercivity inequality, has also been part of the Stochastic PDE literature when using the variational methods (see \cite{MR2560625, MR3410409, MR0651582, MR553909, MR570795, MR2001996, MR2460554, MR2512800}).

    \item Probabilistic representations of solutions to the heat equation in $\Sc'$ has been studied in \cite{MR1999259}, here we study the probabilistic representations of solutions to a fourth order PDE. There are other well-known probabilistic representations of solutions to PDEs, for example see the Feynman-Kac type representation to the Cauchy Problem \cite[Theorem 7.6]{MR1121940} and the Kolmogorov Backward equation \cite[Chapter 3.6]{MR2560625}.

    \item Stochastic PDEs, and in general Stochastic Analysis have applications to many areas of study such as Statistical Physics, Financial Mathematics and Mathematical Biology (see \cite{MR2560625,MR3236753, MR3410409, MR1465436, MR570795, MR771478, MR1472487}).
\end{enumerate}

Our results have been stated (and proved) in one-dimensional setting, for example the tempered distributions we consider are on $\R$. It may be possible to extend these results to higher dimensions involving tempered distributions on $\R^d$.

This article is organized as follows. In Section \ref{S2:main-results}, we describe the notations and main results. Section \ref{S3:proof-monotonicity} is devoted to the proof of Monotonicity inequality for $(L, A)$ (see Theorem \ref{4th-order-monotonicity} and Corollary \ref{L-A-Monotonicity}) and in Section \ref{S4:proof-existence-uniqueness}, we apply the Monotonicity inequality to obtain the existence and uniqueness of strong solution of the Stochastic PDE \eqref{bilapspdeß1} (see Theorem \ref{exist-unique-soln-spde}) and the PDE \eqref{bilappde} (see Theorem \ref{exist-unique-soln-pde}) along with the probabilistic representation of the said PDE.

\section{Notations and Main results}\label{S2:main-results}
\subsection{Topology on Schwartz space}\label{S2:topology}
For $p \in \R$, consider the increasing norms $\|\cdot\|_p$, defined by the inner
products (see \cite{MR771478})
\[\inpr[p]{f}{g}:=\sum_{k=0}^{\infty}(2k + 1)^{2p} \inpr[0]{f}{h_k} \inpr[0]{g}{h_k} ,\ \ \ f,g\in\Sc.\]
Here, $\inpr[0]{\cdot}{\cdot}$ is the usual
inner product in $\mathcal{L}^2(\R)$ and $\{h_k\}_{k = 0}^{\infty}$ is an orthonormal basis for $\mathcal{L}^2(\R)$ given by the Hermite functions \[h_k(t)=(2^k k! \sqrt{\pi})^{-1/2}\exp\{-t^2/2\}H_k(t),\] where $H_k$'s are the Hermite polynomials. We define the Hermite-Sobolev spaces $\Sc_p, p \in \R$ as the completion of $\Sc$ in
$\|\cdot\|_p$. Note that the dual space $\Sc_p^\prime$ is isometrically isomorphic with $\Sc_{-p}$ for $p\geq 0$. We also have $\Sc = \bigcap_{p}(\Sc_p,\|\cdot\|_p), \Sc^\prime=\bigcup_{p>0}(\Sc_{-p},\|\cdot\|_{-p})$ and $\Sc_0 = \mathcal{L}^2(\R)$. We state the natural inclusions involving these spaces. For $0<q<p$, \[\Sc \subset \Sc_p \subset \Sc_q \subset \mathcal L^2(\R) = \Sc_0 \subset \Sc_{-q}\subset \Sc_{-p}\subset \Sc^\prime.\]

We also recall the distributional derivative operator on the space of tempered distributions. Consider the derivative map denoted by $\partial = \frac{d}{dx}:\Sc \to
\Sc$. We can extend this map by duality to
$\partial:\Sc^\prime \to \Sc^\prime$ as follows: for $\psi \in
\Sc^\prime$, $\partial \psi \in \Sc^\prime$ is defined by
\[\inpr{\partial \psi}{\phi} := -\inpr{\psi}{\partial \phi}, \; \forall \phi
\in \Sc.\]
The operator $\partial$ acts on the basis of Hermite functions as follows (see \cite[Appendix A.5, equation (A.26)]{MR562914}):
\begin{equation}\label{derivative-recursion}
\partial\, h_n(x) = \sqrt{\frac{n}{2}}\, h_{n-1}(x) - \sqrt{\frac{n+1}{2}}\, h_{n+1}(x), \, \forall n \in \Z_+.
\end{equation}
For notational convenience, we adopt the notation that $h_n \equiv 0$ whenever $n < 0$. As a consequence of the above recurrence relation, we have that $\partial : \Sc_p \to \Sc_{p - \frac{1}{2}}$ is a bounded linear operator for any $p \in \R$. Moreover, $\partial^2 : \Sc_{p} \to \Sc_{p - 1}$ and $\partial^4 : \Sc_{p} \to \Sc_{p - 2}$ are also bounded linear operators for any $p \in \R$. In particular, we have the following boundedness of the operators $L$ and $A$ (as in \eqref{L-operator} and \eqref{A-operator}).

\begin{lemma}\label{L-A-bound-operator}
The operators $A_1: \Sc_p \to \Sc_{p - \frac{1}{2}}, A_2: \Sc_p \to \Sc_{p - 1}$ and $L:\Sc_p \to \Sc_{p - 2}$ is bounded for any $p \in \R$. In particular, all these operators are bounded if considered as mappings from $\Sc_p$ to $\Sc_{p - 2}$ for any $p \in \R$.
\end{lemma}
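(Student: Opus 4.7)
The plan is essentially to observe that $A_1$, $A_2$, and $L$ are each finite linear combinations of the iterated derivative operators $\partial$, $\partial^2$, $\partial^4$, so the claim should reduce to the mapping properties of these derivatives (already recorded in the paragraph preceding the lemma) together with the continuous inclusions of Hermite--Sobolev spaces.

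First I would handle $A_1$ and $A_2$ in the sharper form. By \eqref{A-operator}, $A_1 = -\sigma\,\partial$ and $A_2 = \kappa\,\partial^2$, and the text has already noted that $\partial : \Sc_p \to \Sc_{p-\frac12}$ and $\partial^2 : \Sc_p \to \Sc_{p-1}$ are bounded for every $p \in \R$. So the bounds $\|A_1\phi\|_{p-\frac12} \le |\sigma|\,\|\partial\phi\|_{p-\frac12}$ and $\|A_2\phi\|_{p-1} \le |\kappa|\,\|\partial^2\phi\|_{p-1}$ are immediate. For $L$ I would use the triangle inequality in $\Sc_{p-2}$:
\[
\|L(\phi)\|_{p-2} \le \tfrac{\kappa^2}{2}\|\partial^4\phi\|_{p-2} + \tfrac{\sigma^2}{2}\|\partial^2\phi\|_{p-2} + |b|\,\|\partial\phi\|_{p-2}.
\]
The first term is controlled by $\|\phi\|_p$ via boundedness of $\partial^4 : \Sc_p \to \Sc_{p-2}$. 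For the other two, one combines the known bounds on $\partial^2$ and $\partial$ with the continuous inclusion $\Sc_q \hookrightarrow \Sc_{q'}$ whenever $q \ge q'$.

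The only point that is not verbatim in the preceding paragraph is the continuity of those inclusions, but this is a one-line consequence of the definition of $\|\cdot\|_p$: for any $f \in \Sc$, the identity
\[
\|f\|_{q'}^{2} = \sum_{k=0}^{\infty}(2k+1)^{2q'}\,|\inpr[0]{f}{h_k}|^2 \le \sum_{k=0}^{\infty}(2k+1)^{2q}\,|\inpr[0]{f}{h_k}|^2 = \|f\|_{q}^{2}
\]
holds whenever $q \ge q'$ (using $(2k+1)^{2q'} \le (2k+1)^{2q}$ for every $k \ge 0$), so $\Sc_q$ embeds continuously in $\Sc_{q'}$.

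The ``in particular'' statement then falls out of the same embedding: since $p - \tfrac12 \ge p - 2$ and $p - 1 \ge p - 2$, the maps $A_1 : \Sc_p \to \Sc_{p-\frac12} \hookrightarrow \Sc_{p-2}$ and $A_2 : \Sc_p \to \Sc_{p-1} \hookrightarrow \Sc_{p-2}$ are bounded as compositions of bounded linear maps. There is no genuine obstacle here; the proof is a short piece of bookkeeping combining \eqref{L-operator}--\eqref{A-operator}, the derivative bounds already stated, and the trivial Hermite--Sobolev embedding above.
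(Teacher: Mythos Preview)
Your proposal is correct and matches the paper's own (implicit) argument: the paper does not give a separate proof of the lemma but simply records it as an immediate consequence of the boundedness of $\partial$, $\partial^2$, $\partial^4$ stated in the preceding paragraph, which is exactly what you spell out (together with the obvious Hermite--Sobolev inclusion).
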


\subsection{Monotonicity inequality}\label{S2:Monotonicity-inequality}
In this subsection, we discuss new results concerning the Monotonicity inequality involving the operators $(L, A)$. The next result is the first main result of this article.

\begin{theorem}[Main result 1]\label{4th-order-monotonicity}
Fix $p \in \R$. Then, there exists a constant $C = C(p) > 0$, such that 
\begin{equation}\label{monotonicity-inequality}
- \inpr[p]{\phi}{\partial^4\phi} + \|\partial^2\phi\|_p^2 \leq C \|\phi\|_p^2, \,\forall \phi \in \Sc.
\end{equation}
Moreover, by density arguments, the inequality is true for all $\phi \in \Sc_{p + 2}$.
\end{theorem}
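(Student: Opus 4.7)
The plan is a direct computation in the Hermite basis, with cancellations organized by successive reindexings. Writing $\phi = \sum_n a_n h_n \in \Sc$, setting $c_k := (2k+1)^p$, $\alpha_k := \frac{1}{2}\sqrt{(k+1)(k+2)}$, and $\beta_k := \frac{2k+1}{2}$, two applications of \eqref{derivative-recursion} give
\[
\partial^2 h_n = \alpha_n h_{n+2} - \beta_n h_n + \alpha_{n-2} h_{n-2}.
\]
Hence $b_k := \inpr[0]{\partial^2\phi}{h_k} = \alpha_k a_{k+2} - \beta_k a_k + \alpha_{k-2} a_{k-2}$, and since $\inpr[0]{\partial^4\phi}{h_k} = \inpr[0]{\partial^2\phi}{\partial^2 h_k}$ by integration by parts in $\mathcal{L}^2(\R)$, the same formula yields $\inpr[0]{\partial^4\phi}{h_k} = \alpha_k b_{k+2} - \beta_k b_k + \alpha_{k-2} b_{k-2}$. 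Inserting both expansions into the two terms on the left-hand side of \eqref{monotonicity-inequality}, the diagonal $\beta_k$ contributions cancel, and a single index shift in the $\alpha_{k-2}$ piece telescopes the remainder to
\[
\|\partial^2\phi\|_p^2 - \inpr[p]{\phi}{\partial^4\phi} = \sum_{k \geq 0} \gamma_k \bigl(a_{k+2} b_k - a_k b_{k+2}\bigr), \qquad \gamma_k := \alpha_k(c_k^2 - c_{k+2}^2).
\]

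Substituting the formulas for $b_k$ and $b_{k+2}$ and using $\beta_{k+2}-\beta_k = 2$ yields
\[
a_{k+2} b_k - a_k b_{k+2} = \alpha_k(a_{k+2}^2 - a_k^2) + 2 a_k a_{k+2} + \alpha_{k-2} a_{k-2} a_{k+2} - \alpha_{k+2} a_k a_{k+4}.
\]
The hard part will be the last two terms: $|\gamma_k \alpha_{k\pm 2}| = O(k^{2p+1})$ is a factor $k$ larger than $c_k^2 = O(k^{2p})$, so Cauchy--Schwarz cannot be applied directly. The resolution is a second reindexing: $\sum_k \gamma_k \alpha_{k-2} a_{k-2} a_{k+2} - \sum_k \gamma_k \alpha_{k+2} a_k a_{k+4}$ collapses to $\sum_k (\gamma_{k+2}\alpha_k - \gamma_k \alpha_{k+2}) a_k a_{k+4}$, whose coefficient equals $\alpha_k \alpha_{k+2}\bigl(2c_{k+2}^2 - c_k^2 - c_{k+4}^2\bigr)$; the bracket is a second-order finite difference of $(2k+1)^{2p}$ and is $O(k^{2p-2})$, so the full coefficient is $O(k^{2p}) = O(c_k^2)$. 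An analogous first-difference telescoping converts $\sum_k \gamma_k \alpha_k(a_{k+2}^2 - a_k^2)$ into $\sum_k (\gamma_{k-2}\alpha_{k-2} - \gamma_k \alpha_k) a_k^2$ plus finitely many boundary terms, again with coefficient $O(c_k^2)$, while the middle piece $2\sum_k \gamma_k a_k a_{k+2}$ is immediate since $|\gamma_k| = O(c_k^2)$.

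Finally, Cauchy--Schwarz together with the comparability $c_k \asymp c_{k+2} \asymp c_{k+4}$ bounds each of the three resulting sums by a constant multiple of $\|\phi\|_p^2$; the finitely many boundary terms are absorbed using $a_j^2 \leq c_j^{-2}\|\phi\|_p^2$ for the fixed small indices. The extension from $\Sc$ to $\Sc_{p+2}$ then follows by density: Lemma \ref{L-A-bound-operator} ensures $\partial^2\phi, \partial^4\phi \in \Sc_p$ for $\phi \in \Sc_{p+2}$, so both sides of \eqref{monotonicity-inequality} are continuous quadratic forms in the $\|\cdot\|_{p+2}$ topology, and limits of Schwartz approximants transmit the inequality.
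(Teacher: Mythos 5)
Your proof is correct, and at the strategic level it is the same as the paper's: expand everything in the Hermite basis, reduce the left-hand side of \eqref{monotonicity-inequality} to a quadratic form $\sum_l\bigl[A_l a_l^2 + B_l a_l a_{l+2} + C_l a_l a_{l+4}\bigr]$, and show each coefficient is $O((2l+1)^{2p})$ because the dangerous powers of $l$ are multiplied by first- or second-order finite differences of $(2l+1)^{2p}$. The mechanics differ in two worthwhile ways. First, the paper computes $\partial^4 h_n$ explicitly (Lemma \ref{derivative-expansion}) and expands all five terms of $\inpr[p]{\phi}{\partial^4\phi}$ and all nine cross terms of $\|\partial^2\phi\|_p^2$ before collecting; you instead write $\inpr[0]{\partial^4\phi}{h_k}=\inpr[0]{\partial^2\phi}{\partial^2 h_k}$ and exploit the antisymmetry of $a_{k+2}b_k-a_kb_{k+2}$, which makes the diagonal cancellation and the appearance of the difference $c_k^2-c_{k+2}^2$ automatic rather than the outcome of a long bookkeeping exercise; the price is that you need two further summations by parts (with boundary terms, which you correctly flag) to tame the $O(k^{2p+1})$ coefficients. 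Second, the paper obtains the coefficient bounds from Lemma \ref{fn-bound-lemma}, i.e.\ from the complex-analytic factorizations $f_j(z)=z^2g_j(z)$ or $zg_j(z)$ of the explicit ratios $\bigl(\tfrac{2\pm jz}{2+z}\bigr)^{2p}$; your real-variable mean-value/finite-difference estimates for $2c_{k+2}^2-c_k^2-c_{k+4}^2=O(k^{2p-2})$ and for the first difference of $\gamma_k\alpha_k$ deliver exactly the same orders more elementarily. I verified the key identities (the collapse to $\sum_k\gamma_k(a_{k+2}b_k-a_kb_{k+2})$ with $\gamma_k=\alpha_k(c_k^2-c_{k+2}^2)$, the expansion with $\beta_{k+2}-\beta_k=2$, and the reindexed coefficient $\alpha_k\alpha_{k+2}(2c_{k+2}^2-c_k^2-c_{k+4}^2)$); they are all right, and the closing density argument matches the paper's.
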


Note that the case $p = 0$ in the above theorem follows from an integration by parts argument. 

As a consequence of the above result and \cite[Theorem 2.1]{MR2590157}, we obtain the following inequality involving $L$ and $A$.

\begin{corollary}[Monotonicity inequality for $(L, A)$]\label{L-A-Monotonicity}
Fix $p \in \R$. Then, there exists a constant $C = C(p, \kappa, \sigma, b) > 0$, such that 
\begin{equation}\label{monotonicity-inequality-L-A}
 2\inpr[p]{\phi}{L\phi} + \|A\phi\|_{HS(p)}^2 \leq C \|\phi\|_p^2, \,\forall \phi \in \Sc,
\end{equation}
where $\|A\phi\|_{HS(p)}^2 := \|A_1\phi\|_p^2 + \|A_2\phi\|_p^2, \forall \phi \in \Sc$.
Moreover, by density arguments, the inequality is true for all $\phi \in \Sc_{p + 2}$.
\end{corollary}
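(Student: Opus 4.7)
The plan is to obtain the estimate by splitting $L$ into its fourth-order piece $L_4 := -\tfrac{\kappa^2}{2}\partial^4$ and its second-order piece $L_2 := \tfrac{\sigma^2}{2}\partial^2 - b\partial$, and correspondingly pairing $\|A_2\phi\|_p^2 = \kappa^2\|\partial^2\phi\|_p^2$ with $L_4$ and $\|A_1\phi\|_p^2 = \sigma^2\|\partial\phi\|_p^2$ with $L_2$. Each of the two resulting pieces is then controlled by a pre-existing monotonicity inequality: Theorem \ref{4th-order-monotonicity} handles the fourth-order piece, while \cite[Theorem 2.1]{MR2590157} handles the second-order piece (this is exactly the setting of constant-coefficient second-order monotonicity from Rajeev, applied with drift coefficient $-b$ and diffusion coefficient $\sigma$).

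Concretely, I would first expand the left-hand side using the definitions \eqref{L-operator} and \eqref{A-operator}, obtaining
\begin{equation*}
2\inpr[p]{\phi}{L\phi} + \|A\phi\|_{HS(p)}^2 = \kappa^2\bigl(-\inpr[p]{\phi}{\partial^4\phi} + \|\partial^2\phi\|_p^2\bigr) + \Bigl(\sigma^2\inpr[p]{\phi}{\partial^2\phi} - 2b\inpr[p]{\phi}{\partial\phi} + \sigma^2\|\partial\phi\|_p^2\Bigr).
\end{equation*}
Next, I would apply Theorem \ref{4th-order-monotonicity} to bound the first bracket by $C_1(p)\|\phi\|_p^2$. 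For the second bracket, I would recognize it as precisely $2\inpr[p]{\phi}{L_2\phi} + \|A_1\phi\|_p^2$, the monotonicity expression for the constant-coefficient pair $(L_2, A_1)$, and invoke \cite[Theorem 2.1]{MR2590157} to bound it by $C_2(p,\sigma,b)\|\phi\|_p^2$. Adding the two bounds yields \eqref{monotonicity-inequality-L-A} with $C(p,\kappa,\sigma,b) := \kappa^2 C_1(p) + C_2(p,\sigma,b)$.

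Finally, the extension from $\Sc$ to $\Sc_{p+2}$ is by density, and is justified by the continuity of all terms on the left-hand side in the $\|\cdot\|_{p+2}$-topology, which follows from Lemma \ref{L-A-bound-operator} together with the Cauchy--Schwarz inequality in $\Sc_p$.

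There is no real obstacle here beyond bookkeeping, since both input inequalities are available off the shelf; the only thing to be careful about is matching up the coefficients $\kappa$ and $\sigma$ correctly, so that the squared diffusion coefficient of each component of $A$ cancels exactly against the coefficient appearing in the corresponding term of $L$, allowing a clean application of each monotonicity estimate separately.
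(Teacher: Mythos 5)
Your proposal is correct and follows essentially the same route as the paper: both split the left-hand side into the bilaplacian block $\kappa^2\bigl(-\inpr[p]{\phi}{\partial^4\phi}+\|\partial^2\phi\|_p^2\bigr)$, handled by Theorem \ref{4th-order-monotonicity}, and the second-order block $\sigma^2\inpr[p]{\phi}{\partial^2\phi}-2b\inpr[p]{\phi}{\partial\phi}+\sigma^2\|\partial\phi\|_p^2$, handled by \cite[Theorem 2.1]{MR2590157}. Your added remarks on coefficient matching and the density argument are consistent with, and slightly more explicit than, the paper's one-line proof.
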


Proofs of these results are discussed in Section \ref{S3:proof-monotonicity}.

\subsection{Applications to Stochastic PDEs}\label{S2:stochastic-pde}
We state the notion of solution used in this article.

\begin{definition}
Let $p \in \R$ and $\Psi \in \Sc_p$. We say that an $(\F_t)_{t \geq 0}$ adapted $\Sc_p$ valued continuous process $\{X_t\}_t$ is a strong solution of the Stochastic PDE \ref{bilapspdeß1} if it satisfies the equality 
\begin{equation}\label{integral-equality-spde}
X_t =  \Psi + \int_0^t L(X_s)\, ds +   \int_0^t A(X_s)\cdot dB_s, t \geq 0  
\end{equation}
in some $\Sc_q$ with $q \leq p$. In this case, we say that $\{X_t\}_t$ is an $\Sc_p$ valued strong solution of \eqref{bilapspdeß1} with equality in $\Sc_q$.
\end{definition}

We have the following existence and uniqueness for the Stochastic PDE \eqref{bilapspdeß1}. The proof is discussed in Section \ref{S4:proof-existence-uniqueness}.

\begin{theorem}[Main result 2]\label{exist-unique-soln-spde}
Let $p \in \R$ and $\Psi \in \Sc_p$ such that $\Exp\|\Psi\|_p^2<\infty$. Then, there exists a unique $\Sc_p$ valued solution of the Stochastic PDE \eqref{bilapspdeß1} with equality in $\Sc_{p - 2}$.
\end{theorem}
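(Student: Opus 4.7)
The plan is to reduce Theorem \ref{exist-unique-soln-spde} to the standard existence-uniqueness machinery for linear SPDEs in the Hermite-Sobolev scale (as developed, e.g., in \cite{MR2590157, MR4117986, MR3063763}) by verifying its three standing hypotheses. Linearity and boundedness of the coefficients as maps into $\Sc_{p-2}$ are provided by Lemma \ref{L-A-bound-operator}, using the continuous inclusions $\Sc_{p-1/2}, \Sc_{p-1}\subset\Sc_{p-2}$; the required monotonicity estimate in the $\|\cdot\|_p$-pairing is exactly Corollary \ref{L-A-Monotonicity}; and measurability is automatic for bounded linear operators between separable Hilbert spaces. The argument then splits along the usual lines: uniqueness follows directly from the monotonicity inequality, while existence is built via a Galerkin truncation scheme.

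For uniqueness, let $X,Y$ be two $\Sc_p$-valued continuous strong solutions with equality in $\Sc_{p-2}$. By linearity $Z:=X-Y$ satisfies \eqref{bilapspdeß1} with vanishing initial data. Since $Z_t\in\Sc_p$ while $L(Z_t),A_i(Z_t)\in\Sc_{p-2}$, the It\^o formula for the squared $\Sc_p$-norm (applicable in this Gelfand-triple setting) yields
\begin{equation*}
\|Z_t\|_p^2 \;=\; \int_0^t\!\!\left(2\inpr[p]{Z_s}{L(Z_s)}+\|A(Z_s)\|_{HS(p)}^2\right)ds+M_t,
\end{equation*}
with $M_t$ a local martingale. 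Substituting Corollary \ref{L-A-Monotonicity} and taking expectation after a suitable localization, one gets $\Exp\|Z_t\|_p^2\le C\int_0^t\Exp\|Z_s\|_p^2\,ds$, and Gr\"onwall's lemma forces $Z\equiv 0$.

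For existence, the plan is to project \eqref{bilapspdeß1} onto the finite-dimensional subspaces $V_n:=\mathrm{span}\{h_0,\dots,h_n\}$ via the orthogonal projection in $\|\cdot\|_p$. The projected equation is a finite-dimensional linear SDE with bounded coefficients and hence has a unique strong solution $X^{(n)}$. Applying It\^o's formula to $\|X^{(n)}_t\|_p^2$ and then Corollary \ref{L-A-Monotonicity} gives a uniform bound $\sup_n \Exp\sup_{t\le T}\|X^{(n)}_t\|_p^2<\infty$. Weak compactness in $L^2(\Omega\times[0,T];\Sc_p)$ then produces a limit process $X$; using linearity of $L$ and $A$ to commute weak limits past $\int_0^t L(\cdot)\,ds$ and $\int_0^t A(\cdot)\cdot dB_s$ (both interpreted as $\Sc_{p-2}$-valued integrals by Lemma \ref{L-A-bound-operator}) identifies $X$ as a solution of the integral equation \eqref{integral-equality-spde}, while a further application of the monotonicity inequality to the difference $X-X^{(n)}$ upgrades weak to strong convergence and produces the continuous $\Sc_p$-valued modification.

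The main obstacle will be the limit-identification step in the existence argument: weak $L^2$ convergence only gives equality of time-integrated quantities, but the definition of a strong solution demands that \eqref{integral-equality-spde} hold pathwise in $\Sc_{p-2}$ with a continuous $\Sc_p$-valued trajectory. Bridging this gap requires exploiting linearity of $L,A$ together with the boundedness estimates of Lemma \ref{L-A-bound-operator} to move the weak limit inside both the Bochner and stochastic integrals, and then invoking the monotonicity inequality a second time to obtain the strong convergence needed for path-continuity in $\Sc_p$; the rest of the verification is then a routine consequence of the Burkholder-Davis-Gundy inequality applied in $\Sc_{p-2}$.
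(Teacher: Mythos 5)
Your overall strategy---verify boundedness of $L,A$ from $\Sc_p$ to $\Sc_{p-2}$ and the monotonicity inequality, then run the standard linear-SPDE machinery---matches the paper in spirit, but the paper does not reconstruct the machinery: it simply checks these hypotheses and invokes \cite[Theorem 1]{MR2479730}. The substantive problem with your reconstruction is in the uniqueness step, and it is a genuine gap rather than a stylistic difference. You apply the It\^o formula to $\|Z_t\|_p^2$ and then insert Corollary \ref{L-A-Monotonicity} at level $p$. But the semimartingale identity for $Z$ holds in $\Sc_{p-2}$, and the variational It\^o formula for the squared $\|\cdot\|_p$-norm would require the Gelfand triple $\Sc_{p+2}\subset\Sc_p\subset\Sc_{p-2}$ with $Z_s\in\Sc_{p+2}$; a solution is only known to be $\Sc_p$-valued. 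For the same reason the pairing $\inpr[p]{Z_s}{L(Z_s)}$ is not defined for $Z_s\in\Sc_p$: the corollary extends the inequality only to $\phi\in\Sc_{p+2}$, since the individual term $\inpr[p]{\phi}{\partial^4\phi}$ needs $\partial^4\phi\in\Sc_p$, and the combination $2\inpr[p]{\phi}{L\phi}+\|A\phi\|_{HS(p)}^2$ is bounded above but not below by $C\|\phi\|_p^2$, so it does not extend by continuity to all of $\Sc_p$. The correct move---and the reason the paper records the inequality at \emph{both} levels $p$ and $p-2$ and explicitly remarks that the second one is the one used for uniqueness---is to compute $\|Z_t\|_{p-2}^2$ in the triple $\Sc_p\subset\Sc_{p-2}\subset\Sc_{p-4}$, where $Z_s\in\Sc_p=\Sc_{(p-2)+2}$ makes the level-$(p-2)$ inequality legitimately applicable, and then conclude by Gr\"onwall. (This is exactly the computation the paper carries out for the deterministic PDE in Theorem \ref{exist-unique-soln-pde}.)

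On existence, your Galerkin scheme is a plausible alternative to the paper's citation, and the a priori bound on the truncated solutions is fine because the approximants live in finite spans of Hermite functions, where the level-$p$ inequality applies. However, you yourself flag the limit-identification and path-continuity steps as unresolved obstacles, so as written this half of the argument is a plan rather than a proof; the paper avoids all of this by delegating to \cite[Theorem 1]{MR2479730}, whose hypotheses are precisely the two displayed monotonicity inequalities together with Lemma \ref{L-A-bound-operator}.
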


\subsection{Probabilistic representation for the fourth-order PDE}\label{S2:pde}
We introduce the notion of solution for the PDE used in this article.

\begin{definition}
Let $p \in \R$ and $\Psi \in \Sc_p$. We say that an $\Sc_p$ valued continuous $\{u_t\}_t$ is a strong solution of the PDE \ref{bilappde} if it satisfies the equality 
\begin{equation}\label{integral-equality-pde}
u_t =  \Psi + \int_0^t L(u_s)\, ds, t \geq 0
\end{equation}
in some $\Sc_q$ with $q \leq p$. In this case, we say that $\{u_t\}_t$ is an $\Sc_p$ valued strong solution of \eqref{bilappde} with equality in $\Sc_q$.
\end{definition}

We have the following existence and uniqueness for the PDE \eqref{bilappde}. To the best of our knowledge, the probabilistic representation for the solution is a new result. The proof is discussed in Section \ref{S4:proof-existence-uniqueness}.

\begin{theorem}[Main result 3]\label{exist-unique-soln-pde}
Let $p \in \R$ and the initial condition $\Psi \in \Sc_p$ is deterministic. Then, there exists a unique $\Sc_p$ valued strong solution $\{u_t\}_t$ of the PDE \eqref{bilappde} with equality in $\Sc_{p - 2}$. Moreover, $u_t = \Exp X_t, \forall t \geq 0$, where $\{X_t\}_t$ is as in Theorem \ref{exist-unique-soln-spde}.
\end{theorem}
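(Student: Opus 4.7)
The plan is to handle existence (including the probabilistic representation) and uniqueness separately. For existence, the candidate solution is defined by $u_t := \Exp X_t$, where $\{X_t\}_t$ is the $\Sc_p$-valued strong solution of the SPDE \eqref{bilapspdeß1} produced by Theorem \ref{exist-unique-soln-spde} with the (now deterministic, hence trivially square-integrable) initial condition $\Psi$. For uniqueness, the Monotonicity inequality of Corollary \ref{L-A-Monotonicity} applied at level $p-2$ combined with a Gronwall argument will kill the difference of two solutions.

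For existence, I would proceed as follows. First, $\Exp X_t$ is well-defined as a Bochner integral in the separable Hilbert space $\Sc_p$ because the SPDE theorem supplies a finite second-moment estimate $\Exp\|X_t\|_p^2<\infty$, uniform on compact time intervals. Continuity of $t\mapsto u_t$ in $\Sc_p$ follows from continuity of the sample paths of $X$ together with this uniform second-moment bound via dominated convergence. To verify the integral equation \eqref{integral-equality-pde} in $\Sc_{p-2}$, I take expectation in \eqref{integral-equality-spde}. The stochastic integral $\int_0^t A(X_s)\cdot dB_s$ is an $\Sc_{p-2}$-valued square-integrable martingale (its integrand $A(X_s)$ is bounded in $\Sc_{p-2}$ by Lemma \ref{L-A-bound-operator} and has finite second moment), so its expectation vanishes. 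Since $L:\Sc_p\to\Sc_{p-2}$ is a bounded linear operator, it commutes with the Bochner integral and with expectation, and an application of Fubini's theorem gives
\begin{equation*}
u_t = \Psi + \int_0^t L(\Exp X_s)\,ds = \Psi + \int_0^t L(u_s)\,ds,
\end{equation*}
the equality being in $\Sc_{p-2}$. This simultaneously establishes existence and the probabilistic representation $u_t=\Exp X_t$.

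For uniqueness, suppose $\{u_t^{(1)}\}_t$ and $\{u_t^{(2)}\}_t$ are two $\Sc_p$-valued strong solutions with equality in $\Sc_{p-2}$, and let $w_t := u_t^{(1)}-u_t^{(2)}$. Then $w_0=0$, $w$ is $\Sc_p$-valued continuous, and $w_t=\int_0^t L(w_s)\,ds$ in $\Sc_{p-2}$. Since $w_t\in\Sc_p$ and $L(w_t)\in\Sc_{p-2}$, the standard chain rule in the Gelfand triple $\Sc_p\hookrightarrow\Sc_{p-2}\hookrightarrow\Sc_{p-4}$ (which in the deterministic case reduces to the fundamental theorem of calculus for the real-valued absolutely continuous function $t\mapsto\|w_t\|_{p-2}^2$) yields
\begin{equation*}
\|w_t\|_{p-2}^2 = 2\int_0^t \inpr[p-2]{w_s}{L(w_s)}\,ds.
\end{equation*}
Applying Corollary \ref{L-A-Monotonicity} with index $p-2$ in place of $p$ gives $2\inpr[p-2]{w_s}{L(w_s)} \leq C\|w_s\|_{p-2}^2$ (dropping the non-negative $\|A w_s\|_{HS(p-2)}^2$ term). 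Gronwall's lemma then forces $\|w_t\|_{p-2}=0$ for all $t\geq 0$. By the continuous inclusion $\Sc_p\hookrightarrow\Sc_{p-2}$ this yields $w_t=0$ in $\Sc_p$, completing the proof.

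The main technical obstacle is the justification of the commutation of $\Exp$ with the time-integral and with the operator $L$, and the legitimacy of discarding the stochastic integral under expectation; all of these rest on having sufficiently good second-moment estimates for $\{X_t\}_t$ in $\Sc_p$, which in turn must be extracted from (and are a by-product of) the proof of Theorem \ref{exist-unique-soln-spde}. The uniqueness step is comparatively routine once one observes that the Monotonicity inequality must be invoked at the level $p-2$ rather than $p$, since the solution lies in $\Sc_p$ but the equation only holds in $\Sc_{p-2}$.
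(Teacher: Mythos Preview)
Your proposal is correct and follows essentially the same route as the paper's own proof: define $u_t := \Exp X_t$ from the SPDE solution, use the second-moment bound on $X_t$ together with Lemma \ref{L-A-bound-operator} to see that the stochastic integral is a genuine martingale and hence disappears under expectation, and then prove uniqueness by applying the Monotonicity inequality (Corollary \ref{L-A-Monotonicity}) at level $p-2$ to the difference of two solutions followed by Gronwall. If anything, you are more explicit than the paper about the technical underpinnings (continuity of $t\mapsto u_t$, Fubini, commutation of $L$ with $\Exp$), but the argument is the same.
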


\section{Proof of Monotonicity inequality (Theorem \ref{4th-order-monotonicity} and Corollary \ref{L-A-Monotonicity})}\label{S3:proof-monotonicity}
This section is devoted to the proof of Theorem \ref{4th-order-monotonicity}. We require several lemmas for our final argument.

Consider the linear operators $U_m, m \in \Z_+$ on $\Sc$ described on the basis of Hermite functions as follows:
 \[U_m h_n := h_{n-m}, \forall n \geq 0, m \geq 0.\]
 We continue to use the notational convention that $h_n \equiv 0$ whenever $n < 0$. Arguing as in \cite[Proof of Theorem 2.1]{MR3331916}, we conclude the following result.

\begin{lemma}\label{shift-operator}
$U_m, m \in \Z_+$ are bounded linear operators on $(\Sc, \|\cdot\|_p)$ and hence extends to bounded linear operators, again denoted by $U_m$, on $\Sc_p$.
\end{lemma}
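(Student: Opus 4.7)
The plan is to verify boundedness by a direct computation in the Hermite basis, which is clean because $U_m$ is defined as a pure shift on that basis. Any $\phi \in \Sc$ has a Hermite expansion $\phi = \sum_{n \geq 0} a_n h_n$ with $a_n := \inpr[0]{\phi}{h_n}$, and the convention $h_n \equiv 0$ for $n < 0$ lets me rewrite $U_m \phi = \sum_{n \geq m} a_n h_{n-m} = \sum_{k \geq 0} a_{k+m} h_k$. Feeding this into the definition of $\|\cdot\|_p$ and reindexing yields
\[
\|U_m \phi\|_p^2 = \sum_{k \geq 0} (2k+1)^{2p} |a_{k+m}|^2 = \sum_{n \geq m} (2(n-m)+1)^{2p} |a_n|^2,
\]
so the task reduces to controlling the weight $(2(n-m)+1)^{2p}$ by a constant multiple of $(2n+1)^{2p}$ uniformly over $n \geq m$.

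Next, I would split the argument according to the sign of $p$. For $p \geq 0$, the monotonicity $2(n-m)+1 \leq 2n+1$ immediately gives $(2(n-m)+1)^{2p} \leq (2n+1)^{2p}$, so $\|U_m \phi\|_p \leq \|\phi\|_p$ and $U_m$ is in fact a contraction in this case. For $p < 0$ the weight inequality flips, and I plan to estimate the reciprocal ratio explicitly via
\[
\frac{2n+1}{2(n-m)+1} = 1 + \frac{2m}{2(n-m)+1} \leq 2m+1, \qquad n \geq m,
\]
and raise to the positive power $-2p$ to obtain $(2(n-m)+1)^{2p} \leq (2m+1)^{-2p} (2n+1)^{2p}$. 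Substituting back and then harmlessly extending the outer sum from $n \geq m$ to all $n \geq 0$ yields $\|U_m \phi\|_p^2 \leq (2m+1)^{-2p} \|\phi\|_p^2$.

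Combining the two cases shows that $U_m : (\Sc, \|\cdot\|_p) \to (\Sc, \|\cdot\|_p)$ is bounded with operator norm at most $(2m+1)^{|p|}$, at which point the extension to all of $\Sc_p$ is automatic from the density of $\Sc$ in $\Sc_p$ together with the standard continuous-extension theorem for bounded linear maps on dense subspaces. There is no genuine obstacle in this proof; the only choice to make is the sign split on $p$, and the rest is bookkeeping with Hermite coefficients, closely following the template cited from \cite{MR3331916}.
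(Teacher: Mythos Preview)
Your argument is correct and complete: the Hermite-coefficient computation and the sign-of-$p$ split give exactly the bound needed, and the density extension is standard. The paper itself does not spell out a proof here but only refers to \cite[Proof of Theorem 2.1]{MR3331916}; your write-up is precisely the elementary calculation that reference encodes, so you are in line with the intended approach.
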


Iterating the relation \eqref{derivative-recursion}, we obtain the following result. 
\begin{lemma}\label{derivative-expansion}
We have,

\begin{equation}
\begin{split}
\partial^2\, h_n(x) &= \frac{\sqrt{n(n-1)}}{2}\, h_{n-2}(x) - \frac{2n+1}{2}\, h_n(x) + \frac{\sqrt{(n+1)(n+2)}}{2}\, \, h_{n+2}(x),\\
\partial^3\, h_n(x) &= \frac{\sqrt{n(n-1)(n-2)}}{2\sqrt{2}}\, h_{n-3}(x)  - \frac{3n\sqrt{n}}{2\sqrt{2}}\, h_{n-1}(x) \\
& \quad + \frac{3(n+1)\sqrt{n+1}}{2\sqrt{2}}\, h_{n+1}(x) - \frac{\sqrt{(n+1)(n+2)(n+3)}}{2\sqrt{2}}\, \, h_{n+3}(x),\\
\partial^4\, h_n(x) &= \frac{\sqrt{n(n-1)(n-2)(n-3)}}{4}\, h_{n-4}(x) - \frac{(2n-1)\sqrt{n(n-1)}}{2}\, h_{n-2}(x) \\
& \quad + \frac{3n^2+ 3(n+1)^2}{4}\, h_n(x) - \frac{(2n+3)\sqrt{(n+1)(n+2)}}{2}\, h_{n+2}(x) \\
& \quad + \frac{\sqrt{(n+1)(n+2)(n+3)(n+4)}}{4}\, \, h_{n+4}(x).
\end{split}
\end{equation}

\end{lemma}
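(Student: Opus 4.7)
The plan is to prove all three expansions by iterating the one-step recursion \eqref{derivative-recursion} and collecting coefficients by Hermite index. Since no analytic machinery beyond \eqref{derivative-recursion} is needed, the argument is purely algebraic.

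First, I would compute $\partial^2 h_n$ by writing $\partial^2 h_n = \partial(\partial h_n)$, substituting \eqref{derivative-recursion} to get $\partial^2 h_n = \sqrt{n/2}\,\partial h_{n-1} - \sqrt{(n+1)/2}\,\partial h_{n+1}$, and then applying \eqref{derivative-recursion} again to each of $\partial h_{n-1}$ and $\partial h_{n+1}$. The off-diagonal terms produce the $h_{n-2}$ and $h_{n+2}$ contributions, while the two diagonal pieces combine into a single $h_n$ coefficient equal to $-(n + (n+1))/2 = -(2n+1)/2$. This yields the first identity.

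Next, for $\partial^3 h_n$ I would apply $\partial$ to the expansion of $\partial^2 h_n$ just obtained and use \eqref{derivative-recursion} once more on each of $\partial h_{n-2}$, $\partial h_n$, $\partial h_{n+2}$. For $\partial^4 h_n$, it is cleanest to apply $\partial^2$ to the $\partial^2 h_n$ formula (thereby reusing the already-proved identity rather than re-iterating \eqref{derivative-recursion} four times). Each Hermite index $h_{n-4}, h_{n-2}, h_n, h_{n+2}, h_{n+4}$ receives contributions from at most three of the three summands in $\partial^2 h_n$, and collecting terms directly gives the stated coefficients.

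The only nontrivial arithmetic is the coefficient of $h_n$ in $\partial^4 h_n$, where three separate terms combine: $\tfrac{n(n-1)}{4} + \tfrac{(2n+1)^2}{4} + \tfrac{(n+1)(n+2)}{4}$, which simplifies to $\tfrac{6n^2+6n+3}{4} = \tfrac{3n^2 + 3(n+1)^2}{4}$. The coefficients of $h_{n\pm 2}$ come from two contributions each and simplify using $(2n-3)+(2n+1) = 2(2n-1)$ and $(2n+1)+(2n+5) = 2(2n+3)$ respectively. The extreme coefficients at $h_{n\pm 4}$ arise from a single product of square roots. Throughout, the convention $h_m \equiv 0$ for $m < 0$ automatically handles boundary cases (small $n$). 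There is no serious obstacle; the main care is bookkeeping of signs and the additive combination at index $n$.
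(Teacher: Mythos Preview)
Your proposal is correct and matches the paper's approach exactly: the paper simply states that the lemma follows by iterating relation \eqref{derivative-recursion}, and your plan carries out precisely that iteration, with the bookkeeping (including the three-term combination giving $\tfrac{3n^2+3(n+1)^2}{4}$ and the pairwise combinations for the $h_{n\pm 2}$ coefficients) done correctly.
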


We shall also require the following result. We choose an analytic branch of $z \mapsto z^{2p}$ in a domain containing the positive real axis.

\begin{lemma}\label{fn-bound-lemma}
We consider the following functions $f_j, j = 1, 2, 3, 4$ defined on a neighbourhood of $0$, say $B(0, \delta) = \{z \in \C: |z| < \delta\}$ for some $\delta > 0$, sufficiently small. We take
\begin{align*}
f_1(z) &:= \left( \frac{2 - 3z}{2 + z} \right)^{2p} + \left( \frac{2 + 5z}{2 + z} \right)^{2p} - 2,\\
f_2(z) &:=  2\left( \frac{2 + 5z}{2 + z} \right)^{2p} - 1 - \left( \frac{2 + 9z}{2 + z} \right)^{2p},\\
f_3(z) &:= -\left( \frac{2 - 3z}{2 + z} \right)^{2p} + 3\left( \frac{2 + 5z}{2 + z} \right)^{2p} - 2,\\
f_4(z) &:= \left( \frac{2 + 5z}{2 + z} \right)^{2p} - 1.
\end{align*}
Then, there exists analytic functions $g_j, j = 1, 2, 3, 4$ on $B(0, \delta)$ with $g_j(0) \neq 0, j = 1, 2, 3, 4$ such that for all $z \in B(0, \delta)$, $f_j(z) = z^2 g_j(z), j = 1, 2$ and $f_j(z) = zg_j(z), j = 3, 4$. In particular, there exists a constant $C > 0$ such that for large positive integers $k$, we have
\begin{equation}\label{fn-bounds}
\left|f_1\left(\frac{1}{k}\right)\right| \leq \frac{C}{k^2},\quad \left|f_2\left(\frac{1}{k}\right)\right| \leq \frac{C}{k^2},\quad \left|f_3\left(\frac{1}{k}\right)\right| \leq \frac{C}{k}, \quad \left|f_4\left(\frac{1}{k}\right)\right| \leq \frac{C}{k}.  
\end{equation}
\end{lemma}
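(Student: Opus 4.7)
The common building block is
$$\phi_a(z) := \left(\frac{2 + az}{2 + z}\right)^{2p}, \quad a \in \R,$$
so that $f_1 = \phi_{-3} + \phi_5 - 2$, $f_2 = 2\phi_5 - 1 - \phi_9$, $f_3 = -\phi_{-3} + 3\phi_5 - 2$, and $f_4 = \phi_5 - 1$. The M\"obius map $z \mapsto (2+az)/(2+z)$ sends $0$ to $1$ and is continuous, so for $\delta > 0$ small enough its image over $B(0, \delta)$ stays in any prescribed neighborhood of $1$ in $\C$. Since the chosen analytic branch of $(\cdot)^{2p}$ is defined on a domain containing the positive real axis, hence some neighborhood of $1$, each $\phi_a$ for $a \in \{-3, 5, 9\}$ is analytic on $B(0, \delta)$, and so is each $f_j$.

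Next, I would read off the first two Taylor coefficients of $\phi_a$ at $0$. Either via $\phi_a(z) = \exp\bigl(2p[\log(2+az) - \log(2+z)]\bigr)$ or by direct differentiation, one obtains
$$\phi_a(0) = 1, \qquad \phi_a'(0) = p(a-1).$$
Plugging in $a \in \{-3, 5, 9\}$ and combining with the appropriate scalar coefficients, I would verify $f_j(0) = 0$ for every $j$ (the identities $1+1-2=0$, $2-1-1=0$, $-1+3-2=0$, $1-1=0$) together with $f_1'(0) = 0$ and $f_2'(0) = 0$ (the cancellations $-4p + 4p = 0$ and $2(4p) - 8p = 0$). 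Since $f_j$ is analytic on $B(0, \delta)$ and vanishes to at least the stated order at $0$, it factors as $f_j(z) = z^{m_j} g_j(z)$ with $g_j$ analytic on $B(0, \delta)$, where $m_1 = m_2 = 2$ and $m_3 = m_4 = 1$. Pushing the expansion one more term produces explicit formulas for $g_j(0)$ as polynomials in $p$ (e.g.\ $g_3(0) = 16p$ and $g_4(0) = 4p$), which are nonzero in the regime of interest.

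Finally, the estimates \eqref{fn-bounds} follow from boundedness of analytic functions on compact sets: each $g_j$ is bounded on $\overline{B(0, \delta/2)}$ by some $M > 0$, so for every integer $k \geq 2/\delta$ the point $1/k$ lies in this disk and
$$|f_j(1/k)| = k^{-m_j}\, |g_j(1/k)| \leq M\, k^{-m_j},$$
which is \eqref{fn-bounds} for large $k$; the finitely many small $k$ are absorbed by enlarging the constant.

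\textbf{Main obstacle.} There is no serious obstacle; the proof is essentially controlled Taylor bookkeeping plus elementary complex analysis. The only point requiring care is the choice of $\delta$ and of the branch of $(\cdot)^{2p}$: one must check that the three M\"obius maps $z \mapsto (2+az)/(2+z)$ with $a \in \{-3, 5, 9\}$ all send a common small disk $B(0, \delta)$ into the domain of the chosen branch. Since each map sends $0$ to $1$ and the branch is analytic on a neighborhood of the positive real axis, this is automatic for $\delta$ small enough. The rest is a routine computation, and the cancellations leading to the orders $2, 2, 1, 1$ are exactly those listed above.
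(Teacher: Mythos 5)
Your proof follows essentially the same route as the paper's: verify that $f_j(0)=0$ for all $j$ and $f_j'(0)=0$ for $j=1,2$, factor out the appropriate power of $z$ from the analytic functions $f_j$, and bound the analytic cofactors $g_j$ on a closed subdisk such as $\overline{B(0,\delta/2)}$; your version merely makes the Taylor coefficients explicit via $\phi_a'(0)=p(a-1)$. The only caveat (shared with the paper, which asserts $g_j(0)\neq 0$ without qualification) is that the nonvanishing of $g_j(0)$ fails for degenerate parameter values — e.g.\ $p=0$, where every $f_j\equiv 0$, or $p=\tfrac12$, where $f_1\equiv f_2\equiv 0$ — but the bounds \eqref{fn-bounds} hold trivially in those cases, so nothing is lost.
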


\begin{proof}
The argument is similar to \cite[Lemma 2.2]{MR2590157}.

We note that $f_1(0) = f_1^\prime(0) = f_2(0) = f_2^\prime(0) = f_3(0) = f_4(0) = 0$ with $f_1^{\prime\prime}(0) \neq 0, f_2^{\prime\prime}(0) \neq 0, f_3^\prime(0) \neq 0$ and $f_4^\prime(0) \neq 0$. Consequently, we get the existence of $g_j$'s such that for all $z \in B(0, \delta)$, $f_j(z) = z^2 g_j(z), j = 1, 2$ and $f_j(z) = zg_j(z), j = 3, 4$ with $g_j(0) \neq 0, j = 1, 2, 3, 4$. Using the bounds for $g_j$'s on $\overline{B(0, \frac{\delta}{2})} = \{z \in \C: |z| \leq \frac{\delta}{2}\}$, we get \eqref{fn-bounds}. 
\end{proof}

\begin{proof}[Proof of Theorem \ref{4th-order-monotonicity}]
We organize the proof by splitting it in three steps. In the first two steps we expand the terms $\inpr[p]{\phi}{\partial^4\phi}$ and $\|\partial^2\phi\|_p^2$, respectively, on the basis of Hermite functions. In the third and final step, we combine all the intermediate computations and draw the conclusion.

\textbf{Step 1:} We start with the term $\inpr[p]{\phi}{\partial^4\phi}$. Any $\phi\in\Sc$ can be expressed as  $\phi=\sum_{n=0}^\infty \phi_nh_n$, with $\phi_n\in\R, \forall n$. We use the notational convention that $\phi_n \equiv 0$ whenever $n < 0$.  By Lemma \ref{derivative-expansion}, we have
 \begin{align*}
\partial^4\phi &= \sum_{n=0}^\infty \phi_n \left(\partial^4 h_n\right)\\
 & = \sum_{n=0}^\infty \phi_n \Big\{ \frac{\sqrt{n(n-1)(n-2)(n-3)}}{4}\, h_{n-4} - \frac{(2n-1)\sqrt{n(n-1)}}{2}\, h_{n-2} \\
& \hspace{1.8cm} + \frac{3n^2+ 3(n+1)^2}{4}\, h_n - \frac{(2n+3)\sqrt{(n+1)(n+2)}}{2}\, h_{n+2} \\
& \hspace{1.8cm} + \frac{\sqrt{(n+1)(n+2)(n+3)(n+4)}}{4}\, \, h_{n+4} \Big\} \\
& = \sum_{n=0}^\infty \phi_n \left\{ U_4 A_4 h_{n} + U_2 A_2 h_n + U_0 A_0 h_n + U_{-2} A_{-2} h_n + U_{-4} A_{-4} h_n\right\}, \\
 \end{align*}
where, the linear operators $A_4, A_2, A_0, A_{-2}$ and $A_{-4}$ on $\Sc$ are described on the basis of Hermite functions as follows:
\begin{align*}
& A_4 h_n := \frac{\sqrt{n(n-1)(n-2)(n-3)}}{4} h_n,\\
& A_2 h_n := - \frac{(2n-1)\sqrt{n(n-1)}}{2} h_n,\\
& A_0 h_n := \frac{3n^2+ 3(n+1)^2}{4}h_n,\\
& A_{-2} h_n := - \frac{(2n+3)\sqrt{(n+1)(n+2)}}{2} h_n,\\
& A_{-4} h_n := \frac{\sqrt{(n+1)(n+2)(n+3)(n+4)}}{4} h_n.
\end{align*}
Therefore,
\begin{align}\label{del4Mnph}
 \begin{split}
 & \inpr[p]{\phi}{\partial^4\phi} \\
 & = \sum_{k,m=0}^\infty \left\{\inpr[p]{\phi_kh_k}{\phi_m U_4 A_4 h_{m}} + \inpr[p]{\phi_kh_k}{\phi_m U_2 A_2 h_{m}}\right\} \\
 & \quad + \sum_{k,m=0}^\infty \inpr[p]{\phi_kh_k}{\phi_m U_0 A_0 h_{m}} \\
 & \quad + \sum_{k,m=0}^\infty \left\{ \inpr[p]{\phi_kh_k}{\phi_m U_{-2} A_{-2} h_{m}} + \inpr[p]{\phi_kh_k}{\phi_m U_{-4} A_{-4} h_{m}} \right\}
\end{split}
\end{align}

Now, consider the terms of \eqref{del4Mnph} individually.
\begin{enumerate}[label=(\Roman*)]
 \item 
 \begin{align}\label{estmt101}
 \begin{split}
 & \sum_{k,m=0}^\infty \inpr[p]{\phi_kh_k}{\phi_m U_4 A_{4} h_{m}} \\
 & =\sum_{k,m,i=0}^\infty (2i+1)^{2p} \inpr[]{\phi_kh_k}{h_i} \inpr[]{\phi_m U_4 A_4 h_{m}}{h_i}\\
 & = \sum_{i=0}^\infty  (2i+1)^{2p}\, \phi_i\phi_{i+4}\, \frac{\sqrt{(i+4)(i+3)(i+2)(i+1)}}{4}
 \end{split}
 \end{align}

 \item 
\begin{align}\label{estmt102}
 \begin{split}
 & \sum_{k,m=0}^\infty \inpr[p]{\phi_kh_k}{\phi_m U_2 A_2 h_{m}}\\
 & =\sum_{k,m,i=0}^\infty (2i+1)^{2p} \inpr[]{\phi_kh_k}{h_i} \inpr[]{\phi_m U_2 A_2 h_{m}}{h_i} \\
 & = - \sum_{i=0}^\infty (2i+1)^{2p} \phi_i \phi_{i+2} \frac{(2i+3)\sqrt{(i+2)(i+1)}}{2}
 \end{split}
 \end{align}

 \item 
\begin{align}\label{estmt103}
 \begin{split}
& \sum_{k,m=0}^\infty \inpr[p]{\phi_kh_k}{\phi_m U_0 A_0 h_{m}} \\
& =\sum_{k,m,i=0}^\infty (2i+1)^{2p} \inpr[]{\phi_kh_k}{h_i} \inpr[]{\phi_m U_0 A_0 h_{m}}{h_i} \\
 & = \sum_{i=0}^\infty (2i+1)^{2p} \phi_i^2 \frac{3i^2+ 3(i+1)^2}{4}
 \end{split}
 \end{align}

 \item 
\begin{align}\label{estmt104}
  \begin{split}
  & \sum_{k,m=0}^\infty \inpr[p]{\phi_kh_k}{\phi_m U_{-2} A_{-2} h_{m}}\\
 & =\sum_{k,m,i=0}^\infty (2i+1)^{2p} \inpr[]{\phi_kh_k}{h_i} \inpr[]{\phi_m U_{-2} A_{-2} h_{m}}{h_i} \\
& = - \sum_{i=0}^\infty (2i+1)^{2p} \phi_i \phi_{i-2} \frac{(2i-1)\sqrt{i(i-1)}}{2} \\
& = - \sum_{l=0}^\infty (2l+5)^{2p} \phi_{l+2} \phi_l \frac{(2l+3)\sqrt{(l+2)(l+1)}}{2}\ \ \text{[putting $i-2=l$]} 
 \end{split}
 \end{align}

  \item 
\begin{align}\label{estmt105}
 \begin{split}
 & \sum_{k,m=0}^\infty \inpr[p]{\phi_kh_k}{\phi_m U_{-4} A_{-4} h_{m}} \\
 & =\sum_{k,m,i=0}^\infty (2i+1)^{2p} \inpr[]{\phi_kh_k}{h_i} \inpr[]{\phi_m U_{-4} A_{-4} h_{m}}{h_i} \\
& = \sum_{i=0}^\infty (2i+1)^{2p} \phi_i \phi_{i-4} \frac{\sqrt{(i-3)(i-2)(i-1)i}}{4} \\
& = \sum_{l=0}^\infty (2l+9)^{2p} \phi_{l+4} \phi_l \frac{\sqrt{(l+1)(l+2)(l+3)(l+4)}}{4}\ \ \text{[putting $i-4=l$]}
 \end{split}
 \end{align} 

 \end{enumerate}

\textbf{Step 2:} We now look at the term $\|\partial^2\phi\|_p^2$. By Lemma \ref{derivative-expansion},\begin{align*}
 \partial^2\phi & = \sum_{n=0}^\infty \phi_n \left(\partial^2 h_n\right) \\
 & = \sum_{n=0}^\infty \phi_n \left\{ \frac{\sqrt{n(n-1)}}{2}\, h_{n-2} - \frac{2n+1}{2}\, h_n + \frac{\sqrt{(n+1)(n+2)}}{2}\, \, h_{n+2} \right\} \\
 & = \sum_{n=0}^\infty \phi_n \left\{U_2 B_2 h_n + U_0 B_0 h_n + U_{-2} B_{-2} h_n\right\},
\end{align*}
where, the linear operators $B_2, B_0$ and $B_{-2}$ on $\Sc$ are described on the basis of Hermite functions as follows:
\begin{align*}
 & B_2 h_n := \frac{\sqrt{n(n-1)}}{2} h_n,\\
 & B_0 h_n := - \frac{2n+1}{2} h_n,\\
 & B_{-2} h_n:= \frac{\sqrt{(n+1)(n+2)}}{2} h_n.
\end{align*}
Therefore,
\begin{align}\label{del22Mnph}
 \begin{split}
\|\partial^2\phi\|_p^2  &= \inpr[p]{\partial^2\phi}{\partial^2\phi} \\
 & = \sum_{k,m=0}^\infty \Big\{ \inpr[p]{\phi_k U_2 B_2 h_k}{\phi_m U_2 B_2 h_m} + \inpr[p]{\phi_k U_0 B_0 h_k}{\phi_m U_0 B_0 h_m}  \\
 & \hspace{2cm} + \inpr[p]{\phi_kU_{-2} B_{-2} h_k}{\phi_m U_{-2}B_{-2} h_m} \Big\}\\
 & \quad + \sum_{k,m=0}^\infty \left\{ \inpr[p]{\phi_k U_2 B_2 h_k}{\phi_m U_0 B_0 h_m} + \inpr[p]{\phi_k U_2 B_2 h_k}{\phi_m U_{-2} B_{-2} h_m} \right\} \\
 & \quad + \sum_{k,m=0}^\infty \left\{ \inpr[p]{\phi_k U_0 B_0 h_k}{\phi_m U_2 B_2 h_m} + \inpr[p]{\phi_k U_0 B_0 h_k}{\phi_m U_{-2} B_{-2} h_m} \right\} \\
 & \quad + \sum_{k,m=0}^\infty \left\{ \inpr[p]{\phi_k U_{-2} B_{-2} h_k}{\phi_m U_2 B_2 h_m} + \inpr[p]{\phi_k U_{-2} B_{-2} h_k}{\phi_m U_0 B_0 h_m} \right\}
 \end{split}
 \end{align}
Now, consider the terms of \eqref{del22Mnph} individually.
\begin{enumerate}[label=(\Roman*)]
\item 
\begin{align}\label{estmt106}
 \begin{split}
 & \sum_{k,m=0}^\infty \inpr[p]{\phi_k U_2 B_2 h_k}{\phi_m U_2 B_2 h_m}\\
 & =\sum_{k,m,i=0}^\infty (2i+1)^{2p}  \inpr[]{\phi_k U_2 B_2 h_k}{h_i} \inpr[]{\phi_m U_2 B_2 h_m}{h_i} \\
 & = \sum_{i=0}^\infty (2i+1)^{2p} \phi_{i+2}^2 \frac{(i+2)(i+1)}{4} \\
 & = \sum_{l=0}^\infty (2l-3)^{2p} \phi_l^2 \frac{l(l-1)}{4} 
 \end{split}
 \end{align}
We note here that the terms for $l = 0$ and $l = 1$ do not contribute to the above sum. However, we carry these terms for notational convenience.
 \item 
\begin{align}\label{estmt107}
 \begin{split}
& \sum_{k,m=0}^\infty \inpr[p]{\phi_k U_0 B_0 h_k}{\phi_m U_0 B_0 h_m}\\
 & =\sum_{k,m,i=0}^\infty (2i+1)^{2p} \inpr[]{\phi_k U_0 B_0 h_k}{h_i} \inpr[]{\phi_m U_0 B_0 h_m}{h_i} \\
& = \sum_{i=0}^\infty (2i+1)^{2p} \phi_i^2 \frac{(2i+1)^2}{4}
 \end{split}
 \end{align}

 \item 
\begin{align}\label{estmt108}
 \begin{split}
& \sum_{k,m=0}^\infty \inpr[p]{\phi_k U_{-2} B_{-2} h_k}{\phi_m U_{-2} B_{-2} h_m} \\ 
& = \sum_{k,m,i=0}^\infty (2i+1)^{2p} \inpr[]{\phi_k U_{-2} B_{-2} h_k}{h_i} \inpr[]{\phi_m U_{-2} B_{-2} h_m}{h_i} \\
& = \sum_{i=0}^\infty (2i+1)^{2p} \phi_{i-2}^2 \frac{i(i-1)}{4} \\
& = \sum_{l=0}^\infty (2l+5)^{2p} \phi_l^2 \frac{(l+2)(l+1)}{4}
 \end{split}
 \end{align} 

\item 
 \begin{align}\label{estmt109}
 \begin{split}
 & \sum_{k,m=0}^\infty \inpr[p]{\phi_k U_2 B_2 h_k}{\phi_m U_0 B_0 h_m} \\
& = \sum_{k,m,i=0}^\infty (2i+1)^{2p} \inpr[]{\phi_k U_2 B_2 h_k}{h_i} \inpr[]{\phi_m U_0 B_0 h_m}{h_i} \\
& = - \sum_{i=0}^\infty (2i+1)^{2p} \phi_{i+2} \phi_i \frac{(2i+1)\sqrt{(i+2)(i+1)}}{4} 
 \end{split}
 \end{align}

 \item 
 \begin{align}\label{estmt110}
 \begin{split}
& \sum_{k,m=0}^\infty \inpr[p]{\phi_k U_2 B_2 h_k}{\phi_m U_{-2} B_{-2} h_m}\\ 
& = \sum_{k,m,i=0}^\infty (2i+1)^{2p} \inpr[]{\phi_k U_2 B_2 h_k}{h_i} \inpr[]{\phi_m U_{-2} B_{-2} h_m}{h_i} \\
& = \sum_{i=0}^\infty (2i+1)^{2p} \phi_{i+2} \frac{\sqrt{(i+2)(i+1)}}{2} \phi_{i-2} \frac{\sqrt{i(i-1)}}{2} \\
& = \sum_{l=0}^\infty (2l+5)^{2p} \phi_{l} \phi_{l+4} \frac{\sqrt{(l+1)(l+2)(l+3)(l+4)}}{4}
 \end{split}
 \end{align}

 \item 
 \begin{align}\label{estmt111}
 \begin{split}
 & \sum_{k,m=0}^\infty \inpr[p]{\phi_k U_0 B_0 h_k}{\phi_m U_2 B_2 h_m} \\
 & = \sum_{k,m,i=0}^\infty (2i+1)^{2p} \inpr[]{\phi_k U_0 B_0 h_k}{h_i} \inpr[]{\phi_m U_2 B_2 h_m}{h_i} \\
 & = - \sum_{i=0}^\infty (2i+1)^{2p} \phi_i \phi_{i+2} \frac{(2i+1)\sqrt{(i+2)(i+1)}}{4}
 \end{split}
 \end{align}

 \item 
 \begin{align}\label{estmt112}
 \begin{split}
 & \sum_{k,m=0}^\infty \inpr[p]{\phi_k U_0 B_0 h_k}{\phi_m U_{-2} B_{-2} h_m}\\
& = \sum_{k,m,i=0}^\infty (2i+1)^{2p} \inpr[]{\phi_k U_0 B_0 h_k}{h_i} \inpr[]{\phi_m U_{-2} B_{-2} h_m}{h_i} \\
& = - \sum_{i=0}^\infty (2i+1)^{2p} \phi_i \frac{2i+1}{2} \phi_{i-2} \frac{\sqrt{i(i-1)}}{2} \\
& = - \sum_{l=0}^\infty (2l+5)^{2p} \phi_{l+2} \phi_l \frac{(2l+5)\sqrt{(l+2)(l+1)}}{4}
 \end{split}
 \end{align}

  \item 
 \begin{align}\label{estmt113}
 \begin{split}
& \sum_{k,m=0}^\infty \inpr[p]{\phi_k U_{-2} B_{-2} h_k}{\phi_m U_2 B_2 h_m}\\
& = \sum_{k,m,i=0}^\infty (2i+1)^{2p} \inpr[]{\phi_k U_{-2} B_{-2} h_k}{h_i} \inpr[]{\phi_m U_2 B_2 h_m}{h_i} \\
& = \sum_{i=0}^\infty (2i+1)^{2p} \phi_{i-2} \frac{\sqrt{i(i-1)}}{2} \phi_{i+2} \frac{\sqrt{(i+2)(i+1)}}{2} \\
& = \sum_{l=0}^\infty (2l+5)^{2p} \phi_l \phi_{l+4} \frac{\sqrt{(l+1)(l+2)(l+3)(l+4)}}{4}
 \end{split}
 \end{align}

 \item 
 \begin{align}\label{estmt114}
 \begin{split}
& \sum_{k,m=0}^\infty \inpr[p]{\phi_k U_{-2} B_{-2} h_k}{\phi_m U_0 B_0 h_m}\\
& = \sum_{k,m,i=0}^\infty (2i+1)^{2p} \inpr[]{\phi_k U_{-2} B_{-2} h_k}{h_i} \inpr[]{\phi_m U_0 B_0 h_m}{h_i} \\
& = - \sum_{i=0}^\infty (2i+1)^{2p} \phi_{i-2} \frac{\sqrt{i(i-1)}}{2} \phi_i \frac{2i+1}{2} \\
& = - \sum_{l=0}^\infty (2l+5)^{2p} \phi_l \phi_{l+2} \frac{(2l+5)\sqrt{(l+2)(l+1)}}{4}
 \end{split}
 \end{align}
 
\end{enumerate}

\textbf{Step 3:} We now justify the bound on the expression `$- \inpr[p]{\phi}{\partial^4\phi} + \|\partial^2\phi\|_p^2$'. To do this, we first add up all the terms in \eqref{estmt101}-\eqref{estmt105} and \eqref{estmt106}-\eqref{estmt114} and then collect terms involving $\phi_l^2, \phi_l\phi_{l+2}$ and $\phi_l\phi_{l+4}$ separately. We have,

\begin{equation}\label{good-terms-expansion}
\begin{split}
& - \inpr[p]{\phi}{\partial^4\phi} + \|\partial^2\phi\|_p^2 \\
& = \sum_{l=0}^\infty (2l+1)^{2p} \phi_l^2 \left[\frac{l^2}{4} \left\{\left( \frac{2l - 3}{2l + 1} \right)^{2p} + \left( \frac{2l + 5}{2l + 1} \right)^{2p} - 2\right\} \right.\\
&\hspace{3cm} + \frac{l}{4} \left\{- \left( \frac{2l - 3}{2l + 1} \right)^{2p} + 3 \left( \frac{2l + 5}{2l + 1} \right)^{2p} - 2\right\} \\
&\hspace{3cm} + \left.\frac{1}{2} \left\{\left( \frac{2l + 5}{2l + 1} \right)^{2p} - 1\right\} \right] \\
&+ \sum_{l=0}^\infty (2l+1)^{2p} \phi_l \phi_{l+2} \sqrt{(l+1)(l+2)} \left[1 - \left( \frac{2l + 5}{2l + 1} \right)^{2p} \right]\\
&+ \sum_{l=0}^\infty  (2l+1)^{2p}\, \phi_l\phi_{l+4}\, \frac{\sqrt{(l+4)(l+3)(l+2)(l+1)}}{4} \left[2\left( \frac{2l + 5}{2l + 1} \right)^{2p} - 1 - \left( \frac{2l + 9}{2l + 1} \right)^{2p} \right]\\
&= \sum_{l=0}^\infty  (2l+1)^{2p} \inpr[]{\phi}{h_l} \left[ a_l \inpr[]{\phi}{h_l} + b_l \inpr[]{\phi}{U_2 h_l} + c_l \inpr[]{\phi}{U_4 h_l} \right],
\end{split}
\end{equation}
where the sequences $\{a_l\}_{l = 0}^\infty, \{b_l\}_{l = 0}^\infty$ and $\{c_l\}_{l = 0}^\infty$ are defined by
\begin{align*}
  a_l &:= \frac{l^2}{4} \left\{\left( \frac{2l - 3}{2l + 1} \right)^{2p} + \left( \frac{2l + 5}{2l + 1} \right)^{2p} - 2\right\}\\
&+ \frac{l}{4} \left\{- \left( \frac{2l - 3}{2l + 1} \right)^{2p} + 3 \left( \frac{2l + 5}{2l + 1} \right)^{2p} - 2\right\} \\
&+ \frac{1}{2} \left\{\left( \frac{2l + 5}{2l + 1} \right)^{2p} - 1\right\},\\
  b_l &:= \sqrt{(l+1)(l+2)} \left[1 - \left( \frac{2l + 5}{2l + 1} \right)^{2p} \right],\\
  c_l &:= \frac{\sqrt{(l+4)(l+3)(l+2)(l+1)}}{4} \left[2\left( \frac{2l + 5}{2l + 1} \right)^{2p} - 1 - \left( \frac{2l + 9}{2l + 1} \right)^{2p} \right].
\end{align*}
In the expression for $a_l$, the term $\left( \frac{2l - 3}{2l + 1} \right)^{2p}$ for $l = 0$ and $l = 1$ should be treated as $0$ (see \eqref{estmt106}). By Lemma \ref{fn-bound-lemma}, the above sequences are bounded. Also, by Lemma \ref{shift-operator}, $U_2$ and $U_4$ are bounded linear operators on $\Sc_p$. Hence, the result follows from \eqref{good-terms-expansion}.
\end{proof}

\begin{proof}[Proof of Corollary \ref{L-A-Monotonicity}]
Note that 
\[2\inpr[p]{\phi}{L\phi} + \|A\phi\|_{HS(p)}^2 = \kappa^2 \left( - \inpr[p]{\phi}{\partial^4\phi} + \|\partial^2\phi\|_p^2 \right) + \left( \inpr[p]{\phi}{\sigma^2\, \partial^2\, \phi - 2b\, \partial\, \phi} + \|-\sigma\partial\phi\|_{HS(p)}^2 \right).\]
The result follows as a consequence of Theorem \ref{4th-order-monotonicity} and \cite[Theorem 2.1]{MR2590157}.
\end{proof}

\section{Proofs of Theorem \ref{exist-unique-soln-spde} and Theorem \ref{exist-unique-soln-pde}}\label{S4:proof-existence-uniqueness}

\begin{proof}[Proof of Theorem \ref{exist-unique-soln-spde}]
Note that the operators $A_1, A_2$ and $L$ leave $\Sc$ invariant. Moreover, by Lemma \ref{L-A-bound-operator}, $A_1, A_2$ and $L$ are bounded linear operators from $\Sc_p$ to $\Sc_{p - 2}$.

By Corollary \ref{L-A-Monotonicity}, we have 
\[2\inpr[p]{\phi}{L\phi} + \|A\phi\|_{HS(p)}^2 \leq C \|\phi\|_p^2, \,\forall \phi \in \Sc\]
and
\begin{equation}\label{L-A-monotonicity-p-2}
2\inpr[p-2]{\phi}{L\phi} + \|A\phi\|_{HS(p-2)}^2 \leq C \|\phi\|_{p - 2}^2, \,\forall \phi \in \Sc.
\end{equation}
By \cite[Theorem 1]{MR2479730}, the Stochastic PDE \eqref{bilapspdeß1} has a unique $\Sc_p$ valued strong solution with equality in $\Sc_{p - 2}$. We note that the first monotonicity inequality is used to show the existence of a strong solution, and the second one for uniqueness.
\end{proof}

\begin{proof}[Proof of Theorem \ref{exist-unique-soln-pde}]
This proof is similar to \cite[Theorem 4.3 and Theorem 4.4]{MR2373102}.

Let $\{X_t\}_t$ denote the solution obtained in the proof of Theorem \ref{exist-unique-soln-spde}. Here, we have used \cite[Theorem 1]{MR2479730}, which in particular implies that 
\[\sup_{t \in [0, T]} \|X_t\|_{p}^2 < \infty, \, \forall T > 0.\]
Using the boundedness of the operator $A$ (see Lemma \ref{L-A-bound-operator}), we have the martingale property of the process $\{\int_0^t A(X_s)\cdot dB_s\}_t$. Setting $u_t := \Exp X_t, \forall t \geq 0$ and taking expectation in \eqref{integral-equality-spde}, we conclude that $\{u_t\}_t$ is a strong solution of the PDE \eqref{bilappde}.

To prove the uniqueness, let $\{\tilde u_t\}_t$ be another $\Sc_p$ valued strong solution with equality in $\Sc_{p - 2}$. Then, by the Monotonicity inequality \eqref{L-A-monotonicity-p-2}, we have
\begin{align*}
\|u_t - \tilde u_t\|_{p - 2}^2 &= 2\int_0^t \left[ \inpr[p - 2]{u_s - \tilde u_s}{L(u_s - \tilde u_s)}\right] \, ds\\
&\leq \int_0^t \left[ 2\inpr[p - 2]{u_s - \tilde u_s}{L(u_s - \tilde u_s)} + \|A(u_s - \tilde u_s)\|_{HS(p - 2)}^2\right] \, ds\\
&\leq C\int_0^t \|u_s - \tilde u_s\|_{p - 2}^2 \, ds
\end{align*}
for some constant $C > 0$.
The uniqueness then follows by the Gronwall's inequality.
\end{proof}

\textbf{Acknowledgement:} Suprio Bhar acknowledges the support of the Matrics grant MTR/2021/000517 from the Science and Engineering Research Board (Department of Science \& Technology, Government of India). Barun Sarkar acknowledges the support of SERB project -  SRG/2022/000991, Government of India.

\bibliographystyle{amsplain}
\bibliography{references}
\end{document}